\newtheorem{theo}{Theorem}[section]
\newtheorem{theorem}[theo]{Theorem}
\newtheorem{lemma}[theo]{Lemma}
\newtheorem{definition}[theo]{Definition}
\newtheorem{remark}[theo]{Remark}
\theoremstyle{empty}
\journal{Applied Mathematics and Computation}
\begin{document}

\begin{frontmatter}

\title{On the spectrum of the normalized Laplacian of iterated triangulations of graphs}

\author[label01,label3]{Pinchen Xie}
\author[label02,label3]{Zhongzhi Zhang}
\ead{zhangzz@fudan.edu.cn}
\author[label04]{Francesc Comellas}
\ead{comellas@ma4.upc.edu}
\address[label01]{Department of Physics, Fudan
University, Shanghai 200433, China}
\address[label02]{School of Computer Science, Fudan
University, Shanghai 200433, China}
\address[label3]{Shanghai Key Laboratory of Intelligent Information
Processing, Fudan University, Shanghai 200433, China}
\address[label04]{Department of Applied Mathematics IV,
Universitat Polit\`ecnica de Catalunya, 08034 Barcelona, Catalonia, Spain}

\begin{abstract}

The eigenvalues of the normalized Laplacian of a graph provide information on its topological and structural characteristics and also on some relevant dynamical aspects, specifically in relation to random walks.  In this paper we determine the spectra of the normalized Laplacian of iterated triangulations of a generic simple connected graph. As an application, we also find closed-forms for their multiplicative degree-Kirchhoff index, Kemeny's constant and number of spanning trees.

\end{abstract}

\begin{keyword}
Complex networks \sep Normalized Laplacian spectrum \sep  Graph triangulations \sep Degree-Kirchhoff index \sep Kemeny constant \sep Spanning trees
\end{keyword}

\end{frontmatter}

\section{Introduction}

Important structural and dynamical properties of networked systems can be obtained from the eigenvalues and eigenvectors of  matrices associated to their graph representations. The spectra of the adjacency, Laplacian and normalized Laplacian matrices of a graph provide information on the diameter, degree distribution, community structure, paths of a given length,  local clustering, total number of links, number of spanning trees and many more invariants~\cite{CaFaKi10,Ch97,MeBa15,NaNe12,Mi10}. Also, dynamic aspects of a network, such as its synchronizability and random walk properties, can be obtained from the eigenvalues of the Laplacian and normalized Laplacian matrices from which it is possible to calculate some interesting graph invariants like the Estrada index and the Laplacian energy~\cite{BiCoPaTo13,BrHa12,DaGuCeZh13,GoRo01,Kh13}.

In the last years, there has been a particular interest in the study of the eigenvalues and eigenvectors of the normalized Laplacian matrix, since many measures for random walks on a network are linked to them. These include the hitting time, mixing time and Kemeny's constant which can be considered a measure of the efficiency of navigation on the network, see~\cite{HuLi15,KeSn76,LeLo02,Lo93,ZhShCh13}.

This paper is organized as follows. First, in Sec~\ref{pre}, we recall an operation called triangulation that can be applied to any simple connected graph. In Sec~\ref{sec3}, we determine  the spectra of  the normalized Laplacians of iterated triangulations of any simple connected graph and we discuss their structure.
Finally, in  Sec~\ref{app}, we  use the results found to calculate three significant invariants for an iterated triangulation of a graph: the multiplicative degree-Kirchhoff index, its Kemeny's constant and  the number of spanning trees.

\section{Preliminaries}\label{pre}

Let $G(V,E)$ be any simple connected graph with vertex set $V$ and edge set $E$.
Let $N=N_0=|V|$ denote the number of vertices of $G$ and $E_0=|E|$ its number of edges.

We consider a graph operation known in different contexts as Henneberg 1 move~\cite{He11}, $0$-extension~\cite{NiRo14}, vertex addition~\cite[p. 63]{CvDoSa80} and triangulation~\cite{RoKl14}.  Here we will use the latter name as it is more common in  recent literature.

\begin{definition}\label{define}.
The triangulation of $G$, denoted by $\tau(G)$, is the graph obtained by adding a new vertex corresponding to each edge  of G and by joining each new vertex  to the end vertices of the edge corresponding to it .
\end{definition}

We denote $\tau^0(G)=G$. The $n$-triangulation of $G$ is obtained through the iteration
$\tau^n(G)=\tau(\tau^{n-1}(G))$ and $N_n$ and $E_n$  will be the total number of vertices and edges of $\tau^n(G)$.  	
		
From this definition clearly  $E_n=3E_{n-1}$ and $N_n=N_{n-1}+E_{n-1}$ and thus we have:
\begin{equation}
\  E_n=3^{n}E_0 ,\quad \quad \ N_n=N_0+\frac{3^n-1}{2}E_0.
\end{equation}

Figure~\ref{Fig.1}  shows an example of iterated triangulations  where the initial graph is $K_3$ or the triangle graph. In this particular case, the resulting graph is known as  scale-free pseudofractal graph~\cite{DoGoMe02} that exhibits a scale-free and small-world topology. The study of its structural and dynamic properties has produced an abundant literature, since it is a good deterministic model for many real-life networks, see~\cite{CoHa10,CoFeRa04,Ne10,ZhRoZh07,ZhZhCh07} and references therein.
\begin{figure}
\centering
\includegraphics[width=0.4\textwidth]{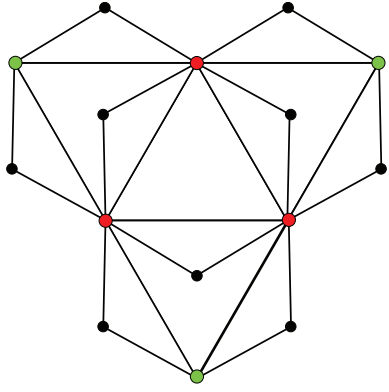}
\caption{An example of $\tau^2(G)$. Red vertices denote the initial $K_3$ vertices while green and black vertices have been introduced  to obtain $\tau(K_3)$ and $\tau^2(K_3)$, respectively.}
 \label{Fig.1}
 \end{figure}

The triangulation operator $\tau$ turns each edge of a graph into a triangle, i.e. a cycle of length $3$.
As we will see in the next section, this property has a relevant impact on the structure of the  spectrum of the normalized Laplacian matrix of $\tau^n(G)$.

We label the vertices of $\tau^n(G)$ from $1$ to $N_n$.
Let $d_i$ be the degree of vertex $i$ of $\tau^n(G)$, then   $D_n={\rm diag}(d_1, d_2, \cdots, d_{N_n})$ denotes
its diagonal degree matrix and $A_n$ the adjacency matrix, defined as a matrix with the $(i,j)$-entry  equal to $1$ if vertices $i$ and $j$ are adjacent and $0$ otherwise.

The Laplacian matrix of $\tau^n(G)$ is $L_n=D_n-A_n$. The probability transition matrix for random walks on  $\tau^n(G)$ or Markov matrix is  $M_n=D_n^{-1}A_n$. $M_n$ can be normalized to obtain a symmetric matrix $P_n$.
\begin{equation}
P_n=D_n^{-\frac{1}{2}} A_n D_n^{-\frac{1}{2}}=D_n ^{\frac{1}{2}}M_n D_n^{-\frac{1}{2}}\,.
\end{equation}
The $(i,j)$th entry of $P_n$ is $P_n(i,j)=\frac{A_n(i,j)}{\sqrt{d_i d_j}}$.
Matrices $P_n$ and $M_n$ share the same eigenvalue spectrum.

\begin{definition}
The normalized Laplacian matrix of $\tau^n(G)$ is
\begin{equation}\label{LP}
{\cal L}_n=I-D_n ^{\frac{1}{2}}M_n D_n^{-\frac{1}{2}}=I-P_n,
\end{equation}
where $I$ is the identity matrix with the same order as $P_n$.
\end{definition}

We denote the spectrum of  ${\cal L}_n$ by $\sigma_n=\left\{\lambda_1^{(n)},\lambda_2^{(n)},\cdots,\lambda_{N_n}^{(n)}\right\}$ where $0=\lambda_1^{(n)}<\lambda_2^{(n)}\leqslant\cdots\leqslant\lambda_{N_n-1}^{(n)}\leqslant\lambda_{N_n}^{(n)}\leqslant2$. The zero eigenvalue is unique, due to the existence of a stationary distribution for random walks. We denote the multiplicity of $\lambda_i$ as $m_{{\cal L}_n}(\lambda_i)$. 
Eq.~\eqref{LP} shows a one-to-one correspondence between the spectra of ${\cal L}_n$  and $P_n$.

The  spectrum of  the normalized Laplacian matrix of a graph can provide us with important structural and dynamical information about the graph, see for example~\cite{Bu16,Ch97}.

 \begin{definition}
If we replace each edge of a simple connected graph $G$ by a unit resistor, we obtain an electrical network $G^*$ associated with $G$. The resistance distance $r_{ij}$ between vertices $i$ and $j$ of $G$ is equal to the effective resistance between the two corresponding vertices of $G^*$~\cite{KlRa93}.
\end{definition}

\begin{definition}
The multiplicative degree-Kirchhoff index of $G$ is defined, as in~\cite{ChZh07}:
\begin{equation}
{K\! f}^*(G)=\sum_{i<j}d_i d_j r_{ij}, \quad  i,j=1,2,\cdots N.
\end{equation}
\end{definition}

This index is different from the classical Kirchhoff index~\cite{BoBaLiKl94}, ${K\! f}(G)=\sum_{i<j}r_{ij}$, as it considers also the degree distribution of the graph. See also~\cite{FeGuYu13,Li13,Pa13} for recents results on these and related indices. For a $k$-regular graph ${K\! f}^* = k^2 {K\! f}$.

It is known~\cite{ChZh07} that ${K\! f}^*(G)$  can be expressed in terms of  the spectrum $\sigma_0=\{\lambda_1,\cdots,\lambda_{N_0}\}$ of the normalized Laplacian matrix  ${\cal L}_0$ of $G$. Thus
\begin{equation}
	{K\! f}^*(G)=2E_0\sum_{k=2}^{N_0}\frac{1}{\lambda_k},
\end{equation}
where $0=\lambda_1<\lambda_2\leqslant\cdots\leqslant\lambda_{N_0}\leqslant 2$.

For $n\geqslant 0$, we can obtain the degree-Kirchhoff index of the $n$-triangulation of $G$ as:
\begin{equation}\label{kirchhoff}
	{K\! f}^*(\tau^n(G))=2E_n\sum_{k=2}^{N_n}\frac{1}{\lambda^{(n)}_k},
\end{equation}
where $0=\lambda_1^{(n)}<\lambda_2^{(n)}\leqslant\cdots\leqslant\lambda_{N_n-1}^{(n)}\leqslant\lambda_{N_n}^{(n)}\leqslant2$ are the eigenvalues of ${\cal L}_n$.

The knowledge of $\sigma_n$ also allows the calculation of an important graph invariant associated with random walks.

\begin{definition}
	Given a graph $G$, the Kemeny's constant $K(G)$, or average hitting time, is the expected number of steps required for the transition from a starting vertex $i$ to a destination vertex, which is chosen randomly according to a stationary distribution of  unbiased random walks on $G$ (see~\cite{Hu14} for more details).
\end{definition}

$K(G)$ is a constant which is independent of the selection of the starting vertex $i$~\cite{LeLo02}.
Moreover,  the Kemeny's constant can be computed as the sum of all reciprocal eigenvalues of  the
normalized Laplacian of $G$, except $1/\lambda_1$, see~\cite{Bu16}.

Therefore, given the spectrum $\sigma_n$ of $\tau^n(G)$ we can write:
\begin{equation}\label{kem}
	K(\tau^n(G))=\sum_{k=2}^{N_n}\frac{1}{\lambda^{(n)}_k}.
\end{equation}
Moreover,  we have:
\begin{equation}\label{KH}
	{K\! f}^*(\tau^n(G))=2E_n\cdot K(\tau^n(G)).
\end{equation}
This equation reflects the fact that, for a connected graph, the resistance distance can be related to random walks~\cite{Pa01}.

The last graph invariant considered  is the number of spanning trees of a graph $G$.
A spanning tree is a subgraph of $G$ that includes all the vertices of $G$ and is a tree.
By using a result from Chung~\cite{Ch97} we can find the total number of spanning trees of $\tau^n(G)$
in terms of  $\sigma_n$, its  normalized Laplacian spectrum,  and the degrees of all the vertices:
\begin{equation}\label{ST01}
N_{\rm st}^{(n)}=\frac{\displaystyle \prod_{i=1}^{N_{n}}
d_i\prod_{i=2}^{N_n}\lambda_i^{(n)}}{\displaystyle \sum_{i=1}^{N_{n}}d_i}\,.
\end{equation}
In the next section we give a closed analytical expression for this invariant for any value of $n\geqslant 0$.

\section{The normalized Laplacian spectrum of the triangulation graph $\tau^n(G)$ }\label{sec3}

In this section we  find an exact analytical expression for the spectrum $\sigma_n$ of ${\cal L}_n$, the normalized  Laplacian  of the $n$-triangulation graph $\tau^n(G)$. We show that this spectrum can be  obtained iteratively from the spectrum of any simple connected graph $G$.

\begin{lemma}\label{eig1}
Let $\lambda$ be any  eigenvalue of  ${\cal L}_n$,  $n> 0$,  such that $\lambda \neq 1$ and $\lambda \neq \frac{3}{2}$. Then $2\lambda$ is an eigenvalue of ${\cal L}_{n-1}$ and its multiplicity, denoted $m_{{\cal L}_{n-1}}(2\lambda)$,
is the same than the multiplicity $m_{{\cal L}_{n}}(\lambda)$ of the eigenvalue $\lambda$ of ${\cal L}_n$. Moreover,  $m_{{\cal L}_{n}}(\frac{3}{2})=N_{n-1}$.
\end{lemma}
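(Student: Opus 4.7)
The plan is to partition the vertex set of $\tau^n(G)$ into the $N_{n-1}$ \emph{old} vertices inherited from $\tau^{n-1}(G)$ and the $E_{n-1}$ \emph{new} vertices, one per edge of $\tau^{n-1}(G)$. Under triangulation, an old vertex $i$ with degree $d_i^{(n-1)}$ in $\tau^{n-1}(G)$ acquires degree $2d_i^{(n-1)}$ in $\tau^n(G)$, while every new vertex has degree exactly $2$. With this split, I would write any eigenvector of $P_n$ with eigenvalue $1-\lambda$ in block form $x=(x_O,x_N)$ and examine the two families of equations.

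The new-vertex row at edge $e=(i,j)$ reads $\frac{1}{2}\bigl(\frac{x_i}{\sqrt{d_i^{(n-1)}}}+\frac{x_j}{\sqrt{d_j^{(n-1)}}}\bigr)=(1-\lambda)\,x_e$, so for $\lambda\neq 1$ the vector $x_N$ is an explicit linear function of $x_O$. Substituting this expression into the old-vertex row at $i$, and using the bookkeeping identity $\sum_{e=(i,j)}\frac{x_i}{\sqrt{d_i^{(n-1)}}}=\sqrt{d_i^{(n-1)}}\,x_i$ (since $i$ belongs to exactly $d_i^{(n-1)}$ edges of $\tau^{n-1}(G)$), I would collect terms to obtain the identity $(3-2\lambda)\,(P_{n-1}x_O)_i=(1-2\lambda)(3-2\lambda)\,(x_O)_i$ for every $i\in V_{n-1}$. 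The crucial algebraic step is to recognize that the cross terms conspire so that $(3-2\lambda)$ factors out cleanly on both sides; this is the main obstacle, after which the two conclusions of the lemma follow quickly.

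When $\lambda\neq 3/2$ the common factor is nonzero and may be cancelled, giving $P_{n-1}x_O=(1-2\lambda)x_O$, equivalently ${\cal L}_{n-1}x_O=2\lambda\,x_O$. To match multiplicities, I would check that the linear map $x\mapsto x_O$ from the $\lambda$-eigenspace of ${\cal L}_n$ to the $2\lambda$-eigenspace of ${\cal L}_{n-1}$ is a bijection: injectivity is immediate because $x_N$ is determined by $x_O$ via the new-vertex formula, and surjectivity is obtained by reversing the computation, i.e.\ starting from any $y$ in the $2\lambda$-eigenspace of ${\cal L}_{n-1}$, setting $x_O=y$, defining $x_N$ by $x_e=\frac{1}{2(1-\lambda)}\bigl(\frac{y_i}{\sqrt{d_i^{(n-1)}}}+\frac{y_j}{\sqrt{d_j^{(n-1)}}}\bigr)$, and verifying that the resulting vector satisfies the old-vertex equation. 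This establishes $m_{{\cal L}_n}(\lambda)=m_{{\cal L}_{n-1}}(2\lambda)$.

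Finally, when $\lambda=3/2$ the factor $(3-2\lambda)$ vanishes, so the old-vertex equation is automatically satisfied for every choice of $x_O$. A $3/2$-eigenvector is therefore specified by an arbitrary $x_O\in\mathbb{R}^{N_{n-1}}$ together with the prescription $x_e=-\bigl(\frac{x_i}{\sqrt{d_i^{(n-1)}}}+\frac{x_j}{\sqrt{d_j^{(n-1)}}}\bigr)$ for the new-vertex components. Since the map $x_O\mapsto(x_O,x_N)$ is linear and injective onto this eigenspace, the eigenspace has dimension exactly $N_{n-1}$, yielding $m_{{\cal L}_n}(3/2)=N_{n-1}$.
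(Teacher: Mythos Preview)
Your proposal is correct and follows essentially the same approach as the paper: partition vertices into old and new, use the new-vertex eigenvector equations to express $x_N$ in terms of $x_O$ (valid since $\lambda\neq 1$), substitute into the old-vertex equations, and extract the factor $(3-2\lambda)$ to reduce to $P_{n-1}x_O=(1-2\lambda)x_O$. Your multiplicity argument via an explicit linear bijection between eigenspaces is slightly crisper than the paper's contradiction argument, and your treatment of $\lambda=3/2$ (the factor vanishes, so the old-vertex equations impose no constraint on $x_O$) is the same idea as the paper's, stated more directly.
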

\begin{proof}
Let ${V}_{\rm new}^n$ be the set of all the newly added vertices  in $\tau^n(G)$ and  ${V}_{\rm old}^n$ be its complement.  This is also equivalent to saying that ${V}_{\rm old}^n$ contains all the vertices inherited from  $\tau^{n-1}(G)$.  For convenience, in the following when we refer to any vertex of ${V}_{\rm old}^n$,  we also refer to  the corresponding vertex of $\tau^{n-1}(G)$.

Let $\psi=(\psi_{1},\psi_{2},\ldots,\psi_{N_n})^\top$ be an eigenvector with respect to the eigenvalue $\lambda$ of  ${\cal L}_n$. Then, as ${\cal L}_n=I-P_n$,  $\psi$ is also an eigenvector of $P_n$ corresponding to the eigenvalue $(1-\lambda)$. Hence
\begin{equation}\label{F0}
P_n\psi=(1-\lambda)\psi\,.
\end{equation}
For any vertex $i\in{V}_{\rm old}^n$, let $ {N}\subset{V}_{\rm new}^n$ denote the set of the new  neighbors of vertex $i$ in $\tau^n(G)$ and $ {N}'\subset{V}_{\rm old}^n$  the set of its old neighbors. There exists, from the definition of triangulation,  a bijection between  ${N}$ and ${N}'$.

Eq.~\eqref{F0} can be written as
\begin{equation}\label{F1}
\sum_{j=1}^{N_n}P_n(i,j)\psi_j=(1-\lambda)\psi_i.
\end{equation}
If $d_n(i)$ denotes the degree of vertex $i$ of $\tau^n(G)$, this equation  leads to
\begin{equation}\label{F2}
\begin{split}
(1-\lambda)\psi_i &=\sum_{\  j\in {N}}\frac{1}{\sqrt{d_n(i)d_n(j)}}\psi_j+\sum_{\  j'\in {N'}}\frac{1}{\sqrt{d_n(i)d_n(j')}}\psi_j'\\
	&=\sum_{\  j\in {N}}\frac{1}{2\sqrt{d_{n-1}(i)}}\psi_j+\sum_{\  j'\in {N'}}\frac{1}{2\sqrt{d_{n-1}(i)d_{n-1}(j')}}\psi_j'.
	\end{split}
\end{equation}
For any $vertex\ j\in {N}$, we have a similar relationship
\begin{equation}\label{F3}
	(1-\lambda)\psi_j=\sum_{k=1}^{N_n}P_n(j,k)\psi_k=\frac{1}{2\sqrt{d_{n-1}(i)}}\psi_i+\frac{1}{2\sqrt{d_{n-1}(j')}}\psi_{j'},
\end{equation}
where vertex $ j'\in {N}'$ is the other neighbor of vertex $ j$ in  $\tau^n(G)$ (excluding the vertex $i$).
Combining Eq.~\eqref{F2} and Eq.~\eqref{F3} we have:
\begin{equation}\label{F4}
\begin{split}
	(1-\lambda)^2\psi_i =&\frac{1}{2\sqrt{d_{n-1}(i)}}\cdot\sum_{\  j'\in {N}'}\left(\frac{1}{2\sqrt{d_{n-1}(i)}}\psi_i+\frac{1}{2\sqrt{d_{n-1}(j')}}\psi_{j'}\right) \\
	 &+\frac{1-\lambda}{2}\cdot\sum_{\  j'\in {N}}\frac{1}{\sqrt{d_{n-1}(i)d_{n-1}(j')}}\psi_j'\\
	=&\frac{\psi_i}{4}+\frac{3-2\lambda}{4}\cdot\sum_{\  j'\in {N'}}\frac{1}{\sqrt{d_{n-1}(i)d_{n-1}(j')}}\psi_j'.
\end{split}
\end{equation}
Therefore,
\begin{equation}\label{F5}
	(1-2\lambda)\psi_i=\sum_{\  j'\in {N}'}\frac{1}{\sqrt{d_{n-1}(i)d_{n-1}(j')}}\psi_{j'}
\end{equation}
holds for $\lambda\neq\frac{3}{2}$.

Eq.~\eqref{F5}  shows  that $(1-2\lambda)$ is an eigenvalue of $P_{n-1}$. Thus $2\lambda$ is an eigenvalue of ${\cal L}_{n-1}$ and $\psi_o=(\psi_i)_{i\in {V}_{\rm old}^n}^\top$ one associated eigenvector.  $\psi$ can be totally determined by $\psi_o$ using  Eq.~\eqref{F3} and $m_{{\cal L}_{n-1}}(2\lambda) \geqslant m_{{\cal L}_n}(\lambda)$.

Suppose now that  $m_{{\cal L}_{n-1}}(2\lambda) > m_{{\cal L}_n}(\lambda)$.
This means that ithere should exist an extra eigenvector $\psi_e$ associated to $2\lambda$ without a corresponding eigenvector in ${\cal L}_n$. But Eq.~\eqref{F3} provides  $\psi_e$ with an associated eigenvector of $P_n$ since  $\lambda\neq 1$,  and this contradicts our assumption. Therefore,   $m_{{\cal L}_{n-1}}(2\lambda) = m_{{\cal L}_n}(\lambda)$.

	When $\lambda=\frac{3}{2}$, $\psi_j$ can be determined by Eq.~\eqref{F3} and we find that Eq.~\eqref{F2} holds for any $\psi_i$ and $\psi_j'$, and thus $m_{{\cal L}_n}(\frac{3}{2})=N_{n-1}$.
\end{proof}
\\

\begin{lemma}\label{eig2}
	 Let $\lambda$ be any eigenvalue of ${\cal L}_{n-1}$ such that $\lambda\neq 2$. Then $\frac{\lambda}{2}$ is an eigenvalue of ${\cal L}_n$. Besides,
	$m_{{\cal L}_n}\left(\frac{\lambda}{2}\right)=m_{{\cal L}_{n-1}}(\lambda)$.
\end{lemma}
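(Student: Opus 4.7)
The plan is to invert the passage from $P_n$ to $P_{n-1}$ used in Lemma~\ref{eig1}: starting from an eigenvector of $P_{n-1}$, I would construct a lifted eigenvector of $P_n$. Setting $\mu := \lambda/2$, the hypothesis $\lambda \neq 2$ amounts to $1-\mu \neq 0$, which is precisely what allows Eq.~\eqref{F3} to be solved for $\psi_j$ in terms of $\psi_i$ and $\psi_{j'}$. Beginning with an eigenvector $\psi_o = (\psi_i)_{i\in V_{\rm old}^n}^{\top}$ of $P_{n-1}$ associated with the eigenvalue $1-\lambda$, I define a vector $\psi$ on $\tau^n(G)$ by keeping its restriction to $V_{\rm old}^n$ equal to $\psi_o$ and, for each new vertex $j \in V_{\rm new}^n$ whose two old neighbors are $i$ and $j'$, setting
\begin{equation*}
\psi_j := \frac{1}{2(1-\mu)}\left[\frac{\psi_i}{\sqrt{d_{n-1}(i)}} + \frac{\psi_{j'}}{\sqrt{d_{n-1}(j')}}\right].
\end{equation*}
By construction, the eigenvalue equation $(P_n\psi)_j = (1-\mu)\psi_j$ at new vertices (which have degree $2$ in $\tau^n(G)$, with neighbors $i$ and $j'$) holds automatically.

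The substantive step is verifying $(P_n\psi)_i = (1-\mu)\psi_i$ at each old vertex $i$. This is the reverse of the computation that produced Eq.~\eqref{F5}. Substituting the definition of $\psi_j$ into the first sum on the right-hand side of Eq.~\eqref{F2} (with $\lambda$ replaced by $\mu$) and using $d_n(i) = 2 d_{n-1}(i)$, the ``diagonal'' terms collapse via $|N'|=d_{n-1}(i)$ to a contribution of $\psi_i/[4(1-\mu)]$, while the remaining terms combine through the relation $\sum_{j'\in N'}\psi_{j'}/\sqrt{d_{n-1}(i)\,d_{n-1}(j')} = (1-\lambda)\psi_i$ satisfied by $\psi_o$. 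Invoking the identities $1-\lambda = 1-2\mu$ and $2-\lambda = 2(1-\mu)$, the expression telescopes to $(1-\mu)\psi_i$, as required.

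Finally, the assignment $\psi_o \mapsto \psi$ is linear and injective (the restriction of $\psi$ to $V_{\rm old}^n$ recovers $\psi_o$), so $m_{{\cal L}_n}(\lambda/2) \geqslant m_{{\cal L}_{n-1}}(\lambda)$. For the reverse inequality, note that $\lambda \in [0,2)$ forces $\mu \in [0,1)$; in particular $\mu \neq 1$ and $\mu \neq \frac{3}{2}$, so Lemma~\ref{eig1} applies and yields $m_{{\cal L}_{n-1}}(2\mu) = m_{{\cal L}_n}(\mu)$, i.e., $m_{{\cal L}_{n-1}}(\lambda) = m_{{\cal L}_n}(\lambda/2)$. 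The main obstacle is the algebraic self-consistency check at old vertices; although it is essentially Lemma~\ref{eig1}'s derivation of Eq.~\eqref{F5} run in reverse, one must keep careful track of the factor $1/(1-\mu)$ introduced by the lifting formula and confirm that it cancels cleanly against $2-\lambda$.
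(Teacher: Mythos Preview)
Your proof is correct and is essentially the same approach as the paper's, just made explicit. The paper's own proof is the single line ``This is a direct consequence of Lemma~\ref{eig1}'': the lifting construction via Eq.~\eqref{F3} already appears inside the proof of Lemma~\ref{eig1} (in the paragraph ruling out $m_{{\cal L}_{n-1}}(2\lambda) > m_{{\cal L}_n}(\lambda)$), and you have carefully unpacked that step and verified the old-vertex equation that the paper glossed over. One minor redundancy: once your construction shows $\mu=\lambda/2$ is an eigenvalue of ${\cal L}_n$ and you observe $\mu\notin\{1,\tfrac32\}$, the \emph{statement} of Lemma~\ref{eig1} already gives the full equality $m_{{\cal L}_n}(\mu)=m_{{\cal L}_{n-1}}(2\mu)$, so the separate injectivity inequality is not needed.
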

\begin{proof}
	 This is a direct consequence of  Lemma~\ref{eig1}
\end{proof}
\\

\begin{remark}\label{remark1}
	Lemmas~\ref{eig1} and~\ref{eig2} show that a large fraction of the eigenvalues of ${\cal L}_n$ are obtained from the spectrum  of ${\cal L}_{n-1}$ as  each eigenvalue, except $2$, of ${\cal L}_{n-1}$ will generate a unique eigenvalue of ${\cal L}_n$. As we know the multiplicity of $\frac{3}{2}$, we conclude that we have  $(2N_{n-1}-m_{{\cal L}_{n-1}}(2))$ eigenvalues of $\sigma_n$. The rest of the spectrum consists of $1$s.
\end{remark}

From the Perron-Frobenius theorem~\cite{friedland2013perron}  we know that the largest absolute value of the eigenvalues of $P_n$  is  1.   Since there exists a unique stationary distribution  for random walks on $\tau^n(G)$, the multiplicity of eigenvalue $1$ of $P_n$ is  $1$. Thus the multiplicity of the smallest eigenvalue of ${\cal L}_n$ is also 1, i.e.  $m_{{\cal L}_n}(0)=1$.

We wish to find the  multiplicity of  2, the largest eigenvalue of ${\cal L}_n$, in the context of Markov chains.
A Markov chain is aperiodic if and only if the smallest eigenvalue of the Markov matrix is different from $-1$.
Thus, the largest eigenvalue of ${\cal L}_n$ is not equal to $2$ if and only if random walks on $\tau^n(G)$ are aperiodic.
Since each edge of $\tau^n(G)$, $n>0$ , belongs to an odd-length cycle (a  triangle), the graph is aperiodic~\cite{Ti96,DiSt91}.
Thus $m_{{\cal L}_n}(2)=0$ holds for  $n>0$.
Note, however, that the value of $m_{{\cal L}_0}(2)$ depends  on the structure of the initial graph $G$, since it may be periodic~\cite{Zh04}.

\begin{definition}
	Let $U=\{u_1,u_2,\cdots,u_k\}$ be any finite multiset of real number. The multiset $\mathcal{R}^{-1 }(U)$  is defined as
	\begin{equation}
			\mathcal{R}^{-1}(U)=\bigg\{\frac{u_1}{2},\frac{u_2}{2},\cdots,\frac{u_k}{2}\bigg\}.
	\end{equation}
\end{definition}

\begin{theo}\label{theo1}
The spectrum $\sigma_n$ of ${\cal L}_n$ is
 \begin{equation}
\sigma_n=
\begin{cases}
\mathcal{R}^{-1}\left(\sigma_{0}\backslash\{2\}\right)\cup\bigg\{\underbrace{\frac{3}{2}, \cdots, \frac{3}{2}}_\text{$N_{0}$}\bigg\} \cup\{\underbrace{1,1, \cdots, 1}_\text{$r_1+m_{{\cal L}_0}(2)$} \}   &\mbox{$n=1$}\\
\mathcal{R}^{-1}\left(\sigma_{n-1}\right)\cup\bigg\{\underbrace{\frac{3}{2}, \cdots, \frac{3}{2}}_\text{$N_{n-1}$}\bigg\} \cup\{\underbrace{1, 1,\cdots, 1}_\text{$r_n$}\} &\mbox{$n>1$},
\end{cases}
\end{equation}
where
\begin{equation}
r_n=\frac{3^{n-1}+1}{2}E_0-N_0.
\end{equation}
\end{theo}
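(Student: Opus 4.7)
The plan is to assemble the spectrum $\sigma_n$ by partitioning it into three pieces: the eigenvalues inherited from $\sigma_{n-1}$ through the halving map $\mathcal{R}^{-1}$, the block of $3/2$'s whose multiplicity is pinned down by Lemma~\ref{eig1}, and a residual block that must consist entirely of $1$'s and whose cardinality is forced by the identity $|\sigma_n|=N_n$. Since Lemmas~\ref{eig1} and~\ref{eig2} already carry all the spectral content, the argument is essentially a bookkeeping exercise; the only subtle point is whether the eigenvalue $2$ can appear in $\sigma_{n-1}$.

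First, I would apply Lemma~\ref{eig2} to every $\lambda\in\sigma_{n-1}$ with $\lambda\neq 2$: this embeds $\mathcal{R}^{-1}(\sigma_{n-1}\setminus\{2\})$ in $\sigma_n$ with multiplicities preserved. For $n>1$ the aperiodicity discussion following Remark~\ref{remark1} gives $m_{{\cal L}_{n-1}}(2)=0$, so this block contains $N_{n-1}$ eigenvalues and the restriction ``$\setminus\{2\}$'' may be dropped, yielding the term $\mathcal{R}^{-1}(\sigma_{n-1})$ in the theorem. For $n=1$ the initial graph $G$ may be bipartite, so one must retain the restriction $\sigma_0\setminus\{2\}$, and the block contributes $N_0-m_{{\cal L}_0}(2)$ eigenvalues.

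Next, the last sentence of Lemma~\ref{eig1} gives directly $m_{{\cal L}_n}(3/2)=N_{n-1}$, producing the second block. Now comes the completeness step: Lemma~\ref{eig1} states that \emph{every} eigenvalue of ${\cal L}_n$ outside $\{1,3/2\}$ has the form $\mu/2$ with $\mu\in\sigma_{n-1}$, and with matching multiplicity. In particular no eigenvalue of ${\cal L}_n$ besides $1$ and $3/2$ can be missing from the first two blocks, so every remaining entry of $\sigma_n$ equals $1$. Calling its multiplicity $r$, we determine $r$ by $r = N_n - N_{n-1} - (N_{n-1}-m_{{\cal L}_{n-1}}(2))$, and substituting the recursions $N_n=N_{n-1}+E_{n-1}$ and $E_n=3^nE_0$ together with $m_{{\cal L}_{n-1}}(2)=0$ (for $n>1$) or $m_{{\cal L}_0}(2)$ (for $n=1$) yields $r=r_n=\frac{3^{n-1}+1}{2}E_0-N_0$ (plus the extra $m_{{\cal L}_0}(2)$ in the base case), matching the statement.

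The main obstacle, such as it is, lies in justifying the completeness claim: one must convince the reader that Lemma~\ref{eig1} covers \emph{all} eigenvalues $\neq 1,3/2$ with the correct multiplicities, so that no hidden eigenvalue sneaks into the residual block. This is secured because Lemma~\ref{eig1} states the equality $m_{{\cal L}_{n-1}}(2\lambda)=m_{{\cal L}_n}(\lambda)$ (not merely an inequality), so the halving map gives a multiplicity-preserving bijection between $\sigma_{n-1}\setminus\{2\}$ and $\sigma_n\setminus\{1,3/2\}$. After that, the separation of the $n=1$ case and the final arithmetic check that the three block sizes sum to $N_n$ are both routine.
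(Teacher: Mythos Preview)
Your proposal is correct and follows essentially the same approach as the paper: both determine the multiplicity of the eigenvalue $1$ by subtraction, using $m_{{\cal L}_n}(1)=N_n-(2N_{n-1}-m_{{\cal L}_{n-1}}(2))$, and then invoke the aperiodicity discussion to set $m_{{\cal L}_{n-1}}(2)=0$ for $n>1$. Your write-up is simply more explicit about the completeness step (that Lemma~\ref{eig1} together with Lemma~\ref{eig2} give a multiplicity-preserving bijection between $\sigma_{n-1}\setminus\{2\}$ and $\sigma_n\setminus\{1,3/2\}$), which the paper compresses into Remark~\ref{remark1} and the phrase ``from results obtained so far.''
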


\begin{proof}
From  results obtained  so far, the multiplicity of the eigenvalue $1$ of ${\cal L}_n$ can be determined indirectly:
\begin{equation}
\begin{split}
			m_{{\cal L}_n}(1)&=N_n-(2N_{n-1}-m_{{\cal L}_{n-1}}(2)) \\
			&=\frac{3^{n-1}+1}{2}E_0-N_0+m_{{\cal L}_{n-1}}(2).
\end{split}
\end{equation}
Since $m_{{\cal L}_{n-1}}(2)=0$ for $n>1$, the theorem is true.
\end{proof}

The two particular eigenvalues $1$ and $\frac{3}{2}$ of ${\cal L}_n$ are called exceptional eigenvalues for the family of matrices $\{{\cal L}_n\}$, for which the spectra shows self-similarity characteristics~\cite{bajorin2008vibration, teplyaev1998spectral}.

Theorem~\ref{theo1} confirms a previous result on the spectrum of the scale-free pseudofractal graph, obtained with other related methods by one of the authors~\cite{ZhLiGu15}.

\section{Applications of the normalized Laplacian spectra of an $n$-triangulation of a graph} \label{app}

From  the spectrum of ${\cal L}_n$, the normalized  Laplacian  of the $n$-triangulation graph $\tau^n(G)$, we compute in this section some relevant invariants related to its structure. We give closed formulas for the multiplicative degree-Kirchhoff index, Kemeny's constant and the number of spanning trees of $\tau^n(G)$. The results depend only on $n$ and some invariants of graph $G$.

\subsection{Multiplicative degree-Kirchhoff index}
\begin{theorem}\label{stan}
	For a simple connected graph $G$, the multiplicative degree-Kirchhoff indices of  $\tau^n(G)$ and  $\tau^{n-1}(G)$, $n>0$, are related as follows:
	\begin{equation}\label{K}
			{K\! f}^*(\tau^n(G))=6{K\! f}^*(\tau^{n-1}(G))-2\cdot3^{n-1}N_0E_0+(5\cdot3^{2n-2}+3^{n-1})E_0^2.
	\end{equation}
	Thus, the general expression for ${K\! f}^*(\tau^n(G))$ is
	\begin{equation}
		{K\! f}^*(\tau^n(G))=6^n{K\! f}^*(G)+(2\cdot3^{n-1}-4\cdot6^{n-1})N_0E_0+(5\cdot3^{2n-1}-8\cdot6^{n-1}-3^{n-1})E_0^2.
	\end{equation}
\end{theorem}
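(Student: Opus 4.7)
The plan is to compute $K\!f^{*}(\tau^{n}(G))$ directly from Eq.~\eqref{kirchhoff} by splitting the sum $\sum_{k=2}^{N_n}1/\lambda_k^{(n)}$ according to the three pieces of $\sigma_n$ supplied by Theorem~\ref{theo1}: the halved eigenvalues from $\sigma_{n-1}$, the $N_{n-1}$ copies of $\tfrac{3}{2}$, and the copies of $1$.

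\textbf{Step 1: Reduce to a recursion for the reciprocal-eigenvalue sum.} For $n>1$, Theorem~\ref{theo1} gives immediately
\begin{equation*}
\sum_{k=2}^{N_n}\frac{1}{\lambda_k^{(n)}}
\;=\; 2\sum_{k=2}^{N_{n-1}}\frac{1}{\lambda_k^{(n-1)}}
\;+\;\frac{2N_{n-1}}{3}\;+\;r_n,
\end{equation*}
where the factor $2$ comes from $\mathcal{R}^{-1}$, the zero eigenvalue $\lambda_1^{(n-1)}=0$ drops out on both sides, and the contributions $N_{n-1}\cdot \tfrac{2}{3}$ and $r_n\cdot 1$ come from the exceptional eigenvalues. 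For $n=1$ the same identity holds: the $m_{\mathcal{L}_0}(2)$ eigenvalues equal to $2$ that are removed from $\mathcal{R}^{-1}(\sigma_0\setminus\{2\})$ each would contribute $2\cdot\tfrac{1}{2}=1$ to the left-hand side, and this loss is exactly offset by the $m_{\mathcal{L}_0}(2)$ extra ones appearing in the last multiset of the $n=1$ case.

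\textbf{Step 2: Convert to a recursion for $K\!f^{*}$.} Multiplying both sides by $2E_n=6E_{n-1}$ and applying Eq.~\eqref{kirchhoff} yields
\begin{equation*}
K\!f^{*}(\tau^{n}(G)) \;=\; 6\,K\!f^{*}(\tau^{n-1}(G)) \;+\; \frac{4E_n N_{n-1}}{3} \;+\; 2E_n r_n .
\end{equation*}
I would then substitute $E_n=3^{n}E_0$, $N_{n-1}=N_0+\tfrac{3^{n-1}-1}{2}E_0$, and $r_n=\tfrac{3^{n-1}+1}{2}E_0-N_0$, and collect coefficients of $N_0E_0$ and $E_0^{2}$ to reach the stated form
\[
K\!f^{*}(\tau^{n}(G))\;=\;6\,K\!f^{*}(\tau^{n-1}(G))\;-\;2\cdot 3^{n-1}N_0E_0\;+\;(5\cdot 3^{2n-2}+3^{n-1})E_0^{2}.
\]

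\textbf{Step 3: Solve the linear recurrence.} Writing $a_n=K\!f^{*}(\tau^{n}(G))$ and $b_n=-2\cdot 3^{n-1}N_0E_0+(5\cdot 3^{2n-2}+3^{n-1})E_0^{2}$, the general first-order solution is $a_n=6^{n}a_0+\sum_{k=1}^{n}6^{n-k}b_k$. Each resulting geometric sum is elementary: $\sum_{k=1}^{n}6^{n-k}3^{k-1}=\tfrac{6^n-3^n}{3}$ and $\sum_{k=1}^{n}6^{n-k}3^{2k-2}=\tfrac{9^n-6^n}{3}$, giving the closed form in the theorem.

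The only delicate point is the $n=1$ bookkeeping mentioned above; once that cancellation is observed the rest is a mechanical, though somewhat tedious, algebraic verification, and I would present it after the recursion is derived so that the same formula covers all $n\ge 1$ uniformly.
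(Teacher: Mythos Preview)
Your proposal is correct and follows essentially the same approach as the paper: split the reciprocal-eigenvalue sum via Theorem~\ref{theo1}, convert to the $K\!f^{*}$ recursion using $2E_n=6E_{n-1}$, and then unwind the recurrence. Your treatment is in fact slightly more careful than the paper's, which silently passes over both the $n=1$ bookkeeping (the cancellation between the removed $2$'s and the extra $1$'s) and the explicit geometric-sum computation needed to reach the closed form.
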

\begin{proof}
	 We use Eq.~\eqref{kirchhoff} and Theorem~\ref{theo1}:
\begin{equation}\label{K1}
\begin{split}
	{K\! f}^*(\tau^n(G))&=2E_n\left(\sum_{k=2}^{N_{n-1}}\frac{1}{\lambda_k^{(n-1)}/2}+\frac{1}{3/2}\times N_{n-1}+\frac{1}{1}\times r_n\right)\\
	&=2E_n\left(\frac{{K\! f}^*(\tau^{n-1}(G))}{E_{n-1}}+\frac{2N_{n-1}}{3}+r_n\right)\\
	&=6{K\! f}^*(\tau^{n-1}(G))-2\cdot3^{n-1}N_0E_0+(5\cdot3^{2n-2}+3^{n-1})E_0^2,
\end{split}
	\end{equation}
 where $\lambda_k^{(n)}$ are eigenvalues of  ${\cal L}_n$.  Finally:
\begin{equation}
	{K\! f}^*(\tau^n(G))=6^n{K\! f}^*(G)+(2\cdot3^{n-1}-4\cdot6^{n-1})N_0E_0+(5\cdot3^{2n-1}-8\cdot6^{n-1}-3^{n-1})E_0^2.
\end{equation}
\end{proof}

\subsection{Kemeny's constant}
\begin{theorem}\label{kemny}
	The Kemeny's constant for random walks on $\tau^n(G)$ can be obtained from $K(\tau^{n-1}(G))$ through
	\begin{equation}
			K(\tau^n(G))=2K(\tau^{n-1}(G))-\frac{N_0}{3}+\frac{5^{n-1}+1}{6}E_0.
	\end{equation}
	The general expression is
	\begin{equation}\label{kemeny}
			K(\tau^n(G))=2^nK(G)+\frac{1-2^n}{3}N_0+\frac{5\cdot3^n-2^{n+2}-1}{6}E_0.
	\end{equation}
\end{theorem}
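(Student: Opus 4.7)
The plan is to extract the Kemeny constant directly from the spectral description in Theorem~\ref{theo1} via equation~\eqref{kem}. Because $\sigma_n$ is, up to two exceptional blocks, a halving of $\sigma_{n-1}$, the reciprocal sum $\sum_{k=2}^{N_n} 1/\lambda_k^{(n)}$ splits naturally into a rescaled copy of $K(\tau^{n-1}(G))$ plus two explicit contributions from the eigenvalues $3/2$ and $1$.

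Concretely, for $n>1$ every nonzero eigenvalue $\mu$ of $\sigma_{n-1}$ contributes $2/\mu$ to $K(\tau^n(G))$ (since it is halved by $\mathcal{R}^{-1}$), the $N_{n-1}$ copies of $3/2$ contribute $\tfrac{2}{3}N_{n-1}$, and the $r_n$ copies of $1$ contribute $r_n$. This immediately gives
\[
K(\tau^n(G)) = 2K(\tau^{n-1}(G)) + \tfrac{2}{3}N_{n-1} + r_n,
\]
and substituting $N_{n-1}=N_0+\tfrac{1}{2}(3^{n-1}-1)E_0$ together with $r_n=\tfrac{1}{2}(3^{n-1}+1)E_0-N_0$ should collapse the explicit terms to $-N_0/3 + \tfrac{1}{6}(5\cdot 3^{n-1}+1)E_0$, which is the stated recurrence. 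The case $n=1$ needs only a brief check: the $m_{\mathcal{L}_0}(2)$ reciprocals missing from $\mathcal{R}^{-1}(\sigma_0\setminus\{2\})$ (each equal to $1/(2/2)=1$) are offset exactly by the $m_{\mathcal{L}_0}(2)$ additional copies of $1$ declared in Theorem~\ref{theo1}, so the same recurrence holds uniformly for all $n\geqslant 1$.

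For the closed form, the slickest route goes through Theorem~\ref{stan} and~\eqref{KH}: dividing the explicit expression for $K\!f^*(\tau^n(G))$ by $2E_n=2\cdot 3^n E_0$ converts the $6^n$ and $3^{2n}$ terms of Theorem~\ref{stan} into the $2^n$ and $3^n$ terms of~\eqref{kemeny} almost mechanically. Alternatively, one can iterate the first-order linear recurrence and sum the resulting geometric series in $2^n$ against a forcing term involving $3^{n-1}$; both approaches reach the same closed form. The only genuine difficulty is algebraic bookkeeping: one must carefully track the three distinct exponential prefactors $2^n$, $3^n$, and $6^n$ and verify that both derivations produce identical coefficients on $K(G)$, $N_0$, and $E_0$. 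No new spectral or analytic ingredient beyond Theorem~\ref{theo1} (or Theorem~\ref{stan} together with~\eqref{KH}) is required.
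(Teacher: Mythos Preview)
Your proposal is correct and essentially follows the paper's approach. The paper's one-line proof simply invokes Theorem~\ref{stan} and~\eqref{KH}, i.e.\ divides the Kirchhoff-index recurrence by $2E_n$; your direct route via Theorem~\ref{theo1} and~\eqref{kem} is the same computation without the $2E_n$ prefactor (indeed, it reproduces the parenthesised sum already displayed in the paper's proof of Theorem~\ref{stan}, equation~\eqref{K1}). Your explicit treatment of the $n=1$ case---observing that the $m_{\mathcal{L}_0}(2)$ missing reciprocals $1/(2/2)=1$ are exactly compensated by the $m_{\mathcal{L}_0}(2)$ extra copies of~$1$---is a detail the paper leaves implicit.

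One remark: the constant term you obtain, $\dfrac{5\cdot 3^{n-1}+1}{6}E_0$, is the correct one and is consistent with both the closed form~\eqref{kemeny} and with dividing~\eqref{K} by $2E_n$. The $\dfrac{5^{n-1}+1}{6}E_0$ appearing in the statement is a typographical error in the paper.
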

\begin{proof}
	This result is a direct consequence of Theorem~\ref{stan} and Eq.~\eqref{KH}.
\end{proof}

\subsection{Spanning trees}

\begin{theorem}\label{sptree}
The number of spanning trees of $\tau^n(G)$, $n>0$, is:
\begin{equation}\label{spp}
N_{\rm st}^{(n)}=3^{\kappa-n}2^{\kappa-n(2N_0-E_0-1)}N_{\rm st}^{(0)}
\end{equation}
where $\kappa=\sum_{i=0}^{n-1}N_i=\frac{3^n-1}{4}E_0+n\left(N_0-\frac{E_0}{2}\right)$.
\end{theorem}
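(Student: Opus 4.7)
The plan is to start from the formula $N_{\rm st}^{(n)}=\frac{\prod_{i=1}^{N_n} d_i \cdot \prod_{i=2}^{N_n}\lambda_i^{(n)}}{\sum_i d_i}$ of Eq.~\eqref{ST01}, evaluate each of the three factors as a function of the corresponding quantities at level $n-1$, derive a multiplicative recursion for $N_{\rm st}^{(n)}$, and then solve it in closed form. The denominator is trivial: $\sum_i d_i = 2E_n = 2\cdot 3^n E_0$.

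For the product of degrees $P_n := \prod_{i=1}^{N_n} d_i$, I would use the key structural fact that under triangulation each old vertex has its degree exactly doubled (it gains one new neighbor per previously incident edge), while each of the $E_{n-1}$ newly introduced vertices receives degree exactly $2$. Therefore
\begin{equation*}
P_n \;=\; 2^{E_{n-1}}\cdot 2^{N_{n-1}}\, P_{n-1} \;=\; 2^{N_n}\,P_{n-1},
\end{equation*}
and by iteration $P_n = 2^{N_1+N_2+\cdots+N_n}\, P_0$.

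For the eigenvalue product $Q_n := \prod_{i=2}^{N_n}\lambda_i^{(n)}$, I would use Theorem~\ref{theo1}. For $n>1$, since $\sigma_{n-1}$ does not contain $2$, the nonzero eigenvalues of $\sigma_n$ consist of the $N_{n-1}-1$ nonzero eigenvalues of $\sigma_{n-1}$ each halved, plus $N_{n-1}$ copies of $3/2$, plus $r_n$ copies of $1$, giving
\begin{equation*}
Q_n \;=\; \frac{Q_{n-1}}{2^{N_{n-1}-1}}\cdot\left(\frac{3}{2}\right)^{N_{n-1}} \;=\; \frac{3^{N_{n-1}}}{2^{2N_{n-1}-1}}\,Q_{n-1}.
\end{equation*}
A short check confirms that for $n=1$ the same recursion holds: the $m_{{\cal L}_0}(2)$ eigenvalues equal to $2$ are excised from $\sigma_0$ before halving (removing a factor $2^{m_{{\cal L}_0}(2)}$ from the numerator), but the denominator exponent drops by the same amount, so the two cancellations conspire to give $Q_1 = 3^{N_0}Q_0/2^{2N_0-1}$ as well.

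Combining these pieces yields the recursion
\begin{equation*}
\frac{N_{\rm st}^{(n)}}{N_{\rm st}^{(n-1)}} \;=\; \frac{P_n\,Q_n\,E_{n-1}}{P_{n-1}\,Q_{n-1}\,E_n} \;=\; 2^{N_n-2N_{n-1}+1}\,3^{N_{n-1}-1},
\end{equation*}
and iterating from $n=0$ produces
\begin{equation*}
N_{\rm st}^{(n)} \;=\; N_{\rm st}^{(0)}\cdot 2^{\sum_{k=1}^n(N_k-2N_{k-1}+1)}\,3^{\sum_{k=1}^n(N_{k-1}-1)}.
\end{equation*}
The main obstacle, and essentially the only remaining work, is the final bookkeeping: recognizing $\sum_{k=1}^n(N_{k-1}-1)=\kappa-n$ immediately from the definition of $\kappa$, and then simplifying $\sum_{k=1}^n(N_k-2N_{k-1}+1)$ via telescoping together with the explicit formulas $N_i = N_0+\frac{3^i-1}{2}E_0$ and $\kappa = \frac{3^n-1}{4}E_0+n(N_0-E_0/2)$, to bring it into the stated form $\kappa - n(2N_0-E_0-1)$. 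This will give exactly Eq.~\eqref{spp}.
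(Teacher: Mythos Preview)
Your proposal is correct and follows essentially the same route as the paper: both start from Eq.~\eqref{ST01}, use the degree-doubling/new-degree-2 observation to get $P_n=2^{N_n}P_{n-1}$, invoke Theorem~\ref{theo1} to obtain $Q_n=\dfrac{3^{N_{n-1}}}{2^{\,2N_{n-1}-1}}Q_{n-1}$, combine into the recursion $N_{\rm st}^{(n)}/N_{\rm st}^{(n-1)}=3^{N_{n-1}-1}2^{\,N_n-2N_{n-1}+1}$, and then iterate. Your treatment is in fact a bit more careful than the paper's at $n=1$, where you explicitly verify that the $m_{{\cal L}_0}(2)$ copies of $2$ removed from $\sigma_0$ cancel against the corresponding drop in the halving exponent; the paper's Eq.~\eqref{st2} passes over this point silently.
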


\begin{proof}
From Eq.~\eqref{ST01} and the definition of triangulation of a graph:
\begin{equation}\label{st1}
\frac{N_{\rm st}^{(n)}}{N_{\rm st}^{(n-1)}}=\frac{2^{N_n}}{3}\cdot\frac{\prod\limits_{i=2}^{N_n}\lambda_i^{(n)}}{\prod\limits_{i=2}^{N_{n-1}}\lambda_i^{(n-1)}},
\end{equation}
where $\lambda_i^{(n)}$ are the eigenvalues of $L_n$.  We obtain, for $n>0$:
\begin{equation}\label{st2}
\begin{split}
\prod\limits_{i=2}^{N_n}\lambda_i^{(n)}&=\left(\frac{3}{2}\right)^{N_{n-1}}\cdot\prod\limits_{i=2}^{N_{n-1}}  \frac{\lambda_i^{(n-1)}}{2}\\
&=\frac{3^{N_{n-1}}}{2^{2N_{n-1}-1}}\prod\limits_{i=2}^{N_{n-1}}\lambda_i^{(n-1)}.
\end{split}
\end{equation}

Therefore, the following equality
\begin{equation}\label{st4}
N_{\rm st}^{(n)}=\frac{3^{N_{n-1}-1}}{2^{2N_{n-1}-N_n-1}}\cdot N_{\rm st}^{(n-1)}=3^{N_{n-1}-1}2^{N_{n-1}-2N_0+E_0+1}N_{\rm st}^{(n-1)},
\end{equation}
holds for any $n>0$, and  finally we have:
\begin{equation}
N_{\rm st}^{(n)}=3^{\kappa-n}2^{\kappa-n(2N_0-E_0-1)}N_{\rm st}^{(0)},
\end{equation}
where $\kappa=\sum_{i=0}^{n-1}N_i$.
\end{proof}

We recall that former expressions for the multiplicative degree-Kirchhoff index, Kemeny's constant and the total number of spanning trees of an $n$-triangulation graph are valid given any initial simple connected graph $G$.

We  note here that the multiplicative degree-Kirchhoff index of $\tau^n(G)$ has been obtained  recently by Yang and Klein~\cite{YaKl15} by using a counting methodology not related with spectral techniques and also that
some results for the particular case of $k$-regular graphs have been  obtained in~\cite{HuLi15}  from the characteristic polynomials of the graphs.

Our result confirms both their calculations and the usefulness of the concise spectral methods described here.

Finally,  the values for Kemeny's constant and the number of spanning trees  of a scale-free pseudofractal graph, which is another particular  case of $n$-triangulation graph, were obtained by one of the authors in~\cite{ZhLiGu15}.
We find the same expressions by using $N_0=E_0=3$ in Eq.~\eqref{kemeny} and Eq.~\eqref{spp}.

\section{Conclusion}
	We have obtained a closed-form expression for the normalized Laplacian spectrum of a $n$-triangulation of any simple connected graph.   This was possible through the analysis of the eigenvectors in relation to adjacent vertices at different iteration steps.  Our method could be also applied to find the spectra of other  graph families which are constructed iteratively. We have also studied the spectrum structure.
 	The knowledge of this spectrum makes very easy, in relation to formerly known methods,  the calculation of several invariants related with structural and dynamic characteristics of the iterated triangulations of a graph. As an example, we find the multiplicative degree-Kirchhoff index,  Kemeny's constant and the number of spanning trees.

\section*{Acknowledgements}

This work was supported by the National Natural Science
Foundation of China under grant No. 11275049. F.C. was supported by the
Ministerio de Economia y Competitividad (MINECO), Spain, and the European
Regional Development Fund under project  MTM2011-28800-C02-01.




\end{document}